\def\Hom{\operatorname{Hom}}
\newtheorem{Theo}{Theorem}
\newtheorem{Lem}[Theo]{Lemma}
\newtheorem{Cor}[Theo]{Corollary}
\begin{document}
\title[Homomorphisms into wreath products]{Homomorphisms from a finite
group into wreath products}
\author[J.-C. Schlage-Puchta]{Jan-Christoph Schlage-Puchta}
\address{Krijgslaan 281\\ Gebouw S22\\9000 Gent\\Belgium}
\email{jcsp@cage.ugent.be}
\subjclass{20P05, 20E22}
\keywords{Wreath products, Homomorphism numbers, Weyl groups}
\begin{abstract}
Let $G$ be a finite group, $A$ a finite abelian group. Each homomorphism
$\varphi:G\rightarrow A\wr S_n$ induces a homomorphism
$\overline{\varphi}:G\rightarrow A$ in a natural way. We show that as
$\varphi$ is 
chosen randomly, then the distribution of $\overline{\varphi}$ is
close to uniform. As application we prove a conjecture of T. M\"uller
on the number of homomorphisms from a finite group into Weyl groups of
type $D_n$.
\end{abstract}
\maketitle
Let $G$ be a finite group, $A$ a finite abelian group. In this article
we consider the number of homomorphisms $G\rightarrow A\wr S_n$, where
$n$ tends to infinity. These numbers are of interest, since they
encode information on the isomorphism types of subgroups of index $n$,
confer \cite{Iso1}, \cite{Iso2}. If $\varphi:G\rightarrow A\wr S_n$ is
a homomorphism, we can construct a homomorphism
$\overline{\varphi}:G\rightarrow A$ as follows. We represent the element
$\varphi(g)\in A\wr S_n$ as $(\sigma; a_1, \ldots, a_n)$, where
$\sigma\in S_n$ and $a_i\in A$, and then define
$\overline{\varphi}(g)=\prod_{i=1}^n a_i$. The fact that
$\overline{\varphi}$ is a homomorphism follows from the fact that $A$
is abelian and the definition of the product within a wreath
product. In this article we prove the following.
\begin{Theo}
\label{thm:main}
Let $G$ be a finite group of order $d$, $A$ a finite abelian group. Define the
distribution function $\delta_n$ on $\Hom(G, A)$ as the image of the
uniform distribution on $\Hom(G, A\wr S_n)$ under the map
$\varphi\mapsto\overline{\varphi}$. Then there exist positive
constants $c, C,$ independent of $n$, such that
$\|\delta_n-u\|_\infty<C e^{-cn^{1/d}}$, where $u$ is the
uniform distribution, and $\|\cdot\|_\infty$ denotes the supremum norm.
\end{Theo}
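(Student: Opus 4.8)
The plan is to reduce the distributional statement to a single scalar quantity, the probability that the underlying permutation action is fixed-point-free, and then to estimate that probability analytically. Write each $\varphi$ as its underlying action $\rho=\pi\circ\varphi:G\to S_n$ (the projection to $S_n$) together with a fibre datum $a\in Z^1(G,A^n_\rho)$, where $A^n_\rho$ is the permutation module; under uniform $\varphi$ the fibre datum is uniform on $Z^1(G,A^n_\rho)$ once $\rho$ is fixed. The key observation is that $\overline{\varphi}(g)=\prod_i a_i(g)=\Sigma(a(g))$, where $\Sigma:A^n\to A$ is the total-sum map, which is permutation-invariant and hence $G$-equivariant into the trivial module. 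If $\rho$ has a fixed point $i_0$, the coordinate $i_0$ splits off as a trivial $G$-summand, so $a_{i_0}$ is uniform on $\Hom(G,A)$ and independent of the remaining coordinates; since $\overline{\varphi}=a_{i_0}\cdot(\text{rest})$ and translation by a uniform independent element of the finite abelian group $\Hom(G,A)$ yields the uniform distribution, the conditional law of $\overline{\varphi}$ given such a $\rho$ is \emph{exactly} uniform. Averaging, conditioned on the event that $\rho$ has at least one fixed point the law of $\overline{\varphi}$ is exactly $u$. Writing $p_n$ for the probability that the underlying action is fixed-point-free, we obtain $\delta_n=(1-p_n)u+p_n\delta_n'$, whence $\|\delta_n-u\|_\infty\le p_n$, and everything reduces to $p_n\le Ce^{-cn^{1/d}}$.

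Next I would set up generating functions. Let $H_n=|\Hom(G,A\wr S_n)|$ and let $F_n$ count those $\varphi$ whose underlying action is fixed-point-free, so $p_n=F_n/H_n$. By the exponential formula applied to the orbit decomposition,
\[\sum_{n\ge0}\frac{H_n}{n!}z^n=\exp\!\big(E(z)\big),\qquad E(z)=\sum_{k\ge1}\frac{c_k}{k!}z^k,\]
with $c_k$ the number of transitive homomorphisms $G\to A\wr S_k$. A transitive action of size $k$ has a point stabiliser $U$ of index $k$, and the number of lifts of such a base action is $|Z^1(G,\mathrm{Ind}_U^G A)|=|A|^{k-1}|\Hom(U,A)|$ by Shapiro's lemma (using $|B^1|=|A|^{k-1}$ and $|H^1|=|\Hom(U,A)|$). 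Hence $E$ is a polynomial with nonnegative coefficients, of degree exactly $d=|G|$ (the trivial subgroup has index $d$), with positive leading coefficient $|A|^{d-1}/d$ and linear coefficient $\beta:=|\Hom(G,A)|$. Removing the size-one orbits merely deletes the linear term, giving $\sum_n\frac{F_n}{n!}z^n=\exp(E(z)-\beta z)$, again a series with nonnegative coefficients.

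For the estimate, since $E(z)-\beta z$ has nonnegative coefficients we get the cheap upper bound $F_n/n!=[z^n]e^{E(z)-\beta z}\le r^{-n}e^{E(r)-\beta r}$ for every $r>0$. For the denominator I would invoke Hayman admissibility of $e^{E}$ (the exponent being a polynomial with positive leading coefficient): there is a saddle point $r_n$ solving $r_nE'(r_n)=n$, and $H_n/n!\sim r_n^{-n}e^{E(r_n)}/\sqrt{2\pi b(r_n)}$ with $b(r_n)$ polynomial in $r_n$. Since $E'(r)\sim|A|^{d-1}r^{d-1}$, one has $r_n\sim(n/|A|^{d-1})^{1/d}\asymp n^{1/d}$. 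Taking $r=r_n$ in the upper bound and dividing,
\[p_n=\frac{F_n}{H_n}\le \big(1+o(1)\big)\sqrt{2\pi b(r_n)}\,e^{-\beta r_n},\]
and as $\sqrt{b(r_n)}=e^{o(n^{1/d})}$ while $\beta r_n\ge c'n^{1/d}$, this is $\le Ce^{-cn^{1/d}}$.

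The conceptual core is the fixed-point observation of the first paragraph, which collapses the entire distributional question to the scalar $p_n$; this should be short once the equivariance of $\Sigma$ and the splitting of a fixed coordinate are noted, and notably it never requires analysing the contribution of the non-fixed-point orbits. The main obstacle I expect to be the analytic step: I need a genuine \emph{lower} bound on $[z^n]e^{E(z)}$, not just an upper bound, and must control the subexponential saddle-point correction $\sqrt{b(r_n)}$ so that it is absorbed into $e^{-cn^{1/d}}$ at the cost of a slightly smaller $c$. Establishing Hayman admissibility for $e^{E}$ and extracting constants $c,C$ that are explicit and uniform in $n$ is where the real care lies; the exponent $1/d$ is rigid, forced by $\deg E=d$, equivalently by the index $d$ of the trivial subgroup.
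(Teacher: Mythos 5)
Your proposal is correct, and its first half is essentially the paper's own argument: the observation that a fixed point of the underlying action splits off a uniform, independent $\Hom(G,A)$-valued summand of $\overline{\varphi}$ --- so that the conditional law of $\overline{\varphi}$ given a fixed point is \emph{exactly} uniform and $\|\delta_n-u\|_\infty\le p_n$ --- is precisely Lemma~\ref{Lem:uniform} together with the paper's concluding paragraph; your cocycle language ($Z^1(G,A^n_\rho)$ and equivariance of the sum map) is just different notation for the paper's product decomposition of the fibre over $\sigma$, and your Shapiro computation of the number of lifts of a transitive action reproves the lemma the paper quotes from \cite{Iso1}. Where you genuinely diverge is in bounding $p_n$, i.e.\ the paper's Lemma~\ref{Lem:Fix}. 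The paper expands $|\Hom(G,A\wr S_n)|$ as a finite sum over orbit-type vectors $(m_1,\ldots,m_\ell)$ --- exactly the coefficient expansion of your $e^{E(z)}$ --- and beats every fixed-point-free term by an explicitly constructed competitor: trade $\lfloor cn^{1/d}\rfloor$ orbits of the most frequent type $G/U_1$ for $(G:U_1)\lfloor cn^{1/d}\rfloor$ fixed points, and verify by bare-hands factorial estimates that the term grows by a factor $e^{cn^{1/d}}$; since there are only polynomially many terms, $p_n=\mathcal{O}(e^{-cn^{1/d}})$ follows with nothing beyond Stirling-type inequalities. You instead bound $F_n/n!\le r^{-n}e^{E(r)-\beta r}$ trivially and obtain the needed \emph{lower} bound on $H_n$ from Hayman's saddle-point asymptotics, giving $p_n\le(1+o(1))\sqrt{2\pi b(r_n)}\,e^{-\beta r_n}$ with $r_n\asymp n^{1/d}$. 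Your route buys a sharper and more transparent exponent ($\beta r_n\sim|\Hom(G,A)|\,(n/|A|^{d-1})^{1/d}$, versus the paper's unoptimized constant) and isolates cleanly why the answer is $e^{-\Theta(n^{1/d})}$; the paper's route is clumsier but completely elementary and self-contained, which matters for a three-page note.

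One repair is needed before your argument is complete. Your parenthetical justification of Hayman admissibility --- ``the exponent being a polynomial with positive leading coefficient'' --- is not sufficient: $e^{z^2}$ has that property and is \emph{not} admissible, since its odd coefficients vanish and the saddle-point asymptotic fails along odd $n$. The correct statement (Hayman's theorem on $e^P$ for real polynomials $P$) requires the Taylor coefficients of $e^P$ to be eventually positive. That hypothesis does hold here, because the linear coefficient of $E$ is $\beta=|\Hom(G,A)|\ge 1$, whence $[z^n]e^{E}\ge\beta^n/n!>0$ for all $n$; equivalently, the exponents carrying positive coefficients of $E$ include $1$, so their gcd is $1$. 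With that reference, and with the admissibility hypothesis checked via $\beta\ge1$ rather than via the leading coefficient, your proof goes through.
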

As an application we prove the following, which confirms a conjecture
by T. M\"uller.
\begin{Cor}
For a finite group $G$ there exists a constant $c>0$, such that if
$W_n$ denotes the Weyl group of type $D_n$, then
\[
|\Hom(G, W_n)| =
\left(\frac{1}{1+s_2(G)}+\mathcal{O}(e^{-cn^{1/d}})\right)|\Hom(G, C_2\wr S_n)|
\]
\end{Cor}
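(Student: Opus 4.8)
The plan is to realise $\Hom(G,W_n)$ as the fibre of the map $\varphi\mapsto\overline\varphi$ over the trivial homomorphism, and then to read off the answer from Theorem~\ref{thm:main} applied with $A=C_2$. First I would recall the standard description of the Weyl group of type $D_n$: inside the hyperoctahedral group $C_2\wr S_n$ it is the kernel of the homomorphism $\pi:C_2\wr S_n\to C_2$ sending $(\sigma;a_1,\dots,a_n)$ to $\prod_{i=1}^n a_i$, that is, the subgroup of signed permutations effecting an even number of sign changes. Comparing this with the definition of $\overline\varphi$ given in the introduction, one sees at once that $\overline\varphi=\pi\circ\varphi$ for every $\varphi\in\Hom(G,C_2\wr S_n)$. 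Consequently a homomorphism $G\to C_2\wr S_n$ factors through $W_n=\ker\pi$ precisely when $\overline\varphi$ is trivial, so that
\[
|\Hom(G,W_n)|=\bigl|\{\varphi\in\Hom(G,C_2\wr S_n):\overline\varphi=\mathbf 1\}\bigr|,
\]
where $\mathbf 1$ denotes the trivial element of $\Hom(G,C_2)$.

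Next I would rewrite the right-hand side in terms of the distribution $\delta_n$. By definition $\delta_n$ is the pushforward of the uniform measure on $\Hom(G,C_2\wr S_n)$ under $\varphi\mapsto\overline\varphi$, so the number of $\varphi$ with $\overline\varphi=\mathbf 1$ is exactly $\delta_n(\mathbf 1)\,|\Hom(G,C_2\wr S_n)|$. Hence
\[
|\Hom(G,W_n)|=\delta_n(\mathbf 1)\,|\Hom(G,C_2\wr S_n)|,
\]
and the whole problem reduces to pinning down the single value $\delta_n(\mathbf 1)$.

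For the uniform distribution I would compute $|\Hom(G,C_2)|$ directly: a nontrivial homomorphism $G\to C_2$ is surjective with kernel of index $2$, and since $\operatorname{Aut}(C_2)$ is trivial this gives a bijection between nontrivial homomorphisms and subgroups of index $2$. Therefore $|\Hom(G,C_2)|=1+s_2(G)$, and the uniform mass at the trivial homomorphism is $u(\mathbf 1)=1/(1+s_2(G))$. Applying Theorem~\ref{thm:main} with the abelian group $A=C_2$ then gives
\[
\Bigl|\delta_n(\mathbf 1)-\tfrac{1}{1+s_2(G)}\Bigr|\le\|\delta_n-u\|_\infty<Ce^{-cn^{1/d}},
\]
so that $\delta_n(\mathbf 1)=\tfrac{1}{1+s_2(G)}+\mathcal O(e^{-cn^{1/d}})$; substituting this into the displayed formula for $|\Hom(G,W_n)|$ yields the Corollary.

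The genuine content of the statement is entirely carried by Theorem~\ref{thm:main}: once the equidistribution estimate is available, the deduction is purely formal. The points that require care are structural rather than computational, namely correctly identifying $D_n$ with $\ker\pi$ (so that the induced map coincides with $\overline\varphi$) and verifying the count $|\Hom(G,C_2)|=1+s_2(G)$, which fixes the main term. I do not expect any real obstacle beyond tracking the normalisation of $\delta_n$ as a probability measure consistently through the identity $|\Hom(G,W_n)|=\delta_n(\mathbf 1)\,|\Hom(G,C_2\wr S_n)|$.
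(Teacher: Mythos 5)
Your proposal is correct and follows essentially the same route as the paper: identify $W_n$ as the set of elements with $a_1\cdots a_n=1$, observe that $\varphi$ lands in $W_n$ exactly when $\overline{\varphi}$ is trivial, invoke Theorem~\ref{thm:main} with $A=C_2$ to control $\delta_n$ at the trivial homomorphism, and finish with the bijection giving $|\Hom(G,C_2)|=1+s_2(G)$. The only caveat is notational: you reuse the symbol $\pi$ for the sign-product map $C_2\wr S_n\to C_2$, whereas the paper reserves $\pi$ for the projection onto $S_n$, so that symbol should be renamed to avoid a clash.
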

This assertion was proven by T. M\"uller under the assumption that $G$
is cyclic (confer \cite[Proposition~3]{wreath}). Different from his
approach we do not enumerate homomorphisms $\varphi$ with given image
$\overline{\varphi}$, but directly work with the distribution of
$\overline{\varphi}$, that is, we obtain the relation between
$|\Hom(G, W_n)|$ and $|\Hom(G, C_2\wr S_n)|$ without actually
computing these functions. 

Denote by $\pi:A\wr S_n\rightarrow S_n$ the canonical projection onto
the active group. The idea of the proof is to stratisfy the set
$|\Hom(G, A\wr S_n)|$ according to
$\pi\circ\varphi\in\Hom(G, S_n)$. It turns out that in strata such that
$\pi\circ\varphi(G)$ viewed as a permutation group on $\{1, \ldots,
n\}$ has a fixed point the distribution of
$\overline{\varphi}$ is actually uniform, while the probability of
having no fixed point is very small. 

\begin{Lem}
\label{Lem:uniform}
Let $\sigma:G\rightarrow S_n$ be a homomorphism such that
$\sigma(G)$ has a fixed point. Define the set
\[
M = \{\varphi:G\rightarrow A\wr S_n: \pi\circ\varphi=\sigma\}.
\]
Then the function $M\rightarrow\Hom(G, A)$ mapping $\varphi$ to
$\overline{\varphi}$ is surjective, and all fibres have the same
cardinality.
\end{Lem}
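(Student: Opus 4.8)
The plan is to make the parametrization of $M$ explicit and then recognize the map $\varphi\mapsto\overline{\varphi}$ as a homomorphism of abelian groups, so that the two assertions separate cleanly. First I would write each $\varphi\in M$ as $\varphi(g)=(\sigma(g);a_1(g),\dots,a_n(g))$ with $a_i(g)\in A$, so that $\varphi$ is determined by the functions $a_i:G\rightarrow A$. Expanding the requirement $\varphi(gh)=\varphi(g)\varphi(h)$ using the multiplication in $A\wr S_n$ shows that the tuple $\mathbf{a}=(a_1,\dots,a_n)$ must satisfy the crossed-homomorphism identity $a_i(gh)=a_i(g)+a_{\sigma(g)^{-1}(i)}(h)$ for all $g,h\in G$ and all $i$, where $G$ acts on $A^n$ by permuting coordinates through $\sigma$; conversely every such tuple gives an element of $M$. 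Thus $M$ is identified with the group of these $1$-cocycles, which is abelian under coordinatewise addition since $A$ is abelian.

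Under this identification $\overline{\varphi}$ is the map $g\mapsto\sum_{i=1}^n a_i(g)$, i.e.\ the composite of $\mathbf{a}$ with the summation map $\varepsilon:A^n\rightarrow A$. Because $\varepsilon$ is additive and unchanged by reordering its summands, the assignment $\mathbf{a}\mapsto\varepsilon\circ\mathbf{a}$ is a homomorphism from the cocycle group into $\Hom(G,A)$, the target being exactly the cocycle group for the trivial action on $A$. Being a homomorphism of abelian groups, its nonempty fibres are precisely the cosets of its kernel and therefore all have the same cardinality; so the equal-fibre assertion of the lemma is immediate \emph{once surjectivity is established}, and no fibre needs to be counted explicitly.

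It remains to prove surjectivity, and this is the only step that uses the fixed point. Let $j$ be an index fixed by every $\sigma(g)$, and given an arbitrary $\chi\in\Hom(G,A)$, I would define a candidate tuple by $a_j(g)=\chi(g)$ and $a_i(g)=0$ for $i\neq j$. Since $\sigma(g)^{-1}$ fixes $j$ and permutes the remaining indices among themselves, the cocycle identity reduces for $i=j$ to $\chi(gh)=\chi(g)+\chi(h)$, which holds, and for $i\neq j$ to $0=0$; hence $\mathbf{a}$ is a genuine cocycle, corresponds to some $\varphi\in M$, and manifestly $\overline{\varphi}=\chi$. The main point to get right is this verification, whose conceptual content is that a fixed coordinate splits off $A^n$ as a trivial $G$-direct summand, which is exactly what lets a target homomorphism be realized on that coordinate without disturbing the cocycle condition elsewhere. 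Everything else is formal.
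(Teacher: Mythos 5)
Your proposal is correct, but it takes a genuinely different route from the paper. The paper exploits the fixed point immediately and set-theoretically: assuming the fixed point is $n$ and writing $\sigma_1$ for the restriction of $\sigma$ to $\{1,\dots,n-1\}$, it identifies $M$ with $M_1\times\Hom(G,A)$, where $M_1$ is the set of lifts of $\sigma_1$ to $A\wr S_{n-1}$ and the second factor is the coordinate over the fixed point; for fixed $\varphi_1\in M_1$, varying $\psi\in\Hom(G,A)$ translates $\overline{\varphi}=\overline{\varphi_1}\psi$ through all of $\Hom(G,A)$ exactly once, so every fibre is in bijection with $M_1$, and surjectivity follows because $M$ contains $g\mapsto(\sigma(g);1,\dots,1)$. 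You instead put an abelian group structure on $M$ itself, identifying it with the group of crossed homomorphisms $G\to A^n$, and observe that $\varphi\mapsto\overline{\varphi}$ is a homomorphism of abelian groups, so its non-empty fibres are cosets of the kernel; the fixed point enters only at the end, to build a preimage of an arbitrary $\chi$ supported on the fixed coordinate. The trade-off: your version shows that equal cardinality of non-empty fibres holds for \emph{every} $\sigma$, isolating the fixed point as exactly what forces surjectivity, which is slightly sharper; the paper's version yields the fibre cardinality explicitly (namely $|M_1|$), in keeping with the counting arguments in the rest of the paper. One minor remark: your cocycle identity $a_i(gh)=a_i(g)+a_{\sigma(g)^{-1}(i)}(h)$ depends on the convention for wreath-product multiplication, but since all you use is that the index shift is a permutation fixing $j$, either convention works.
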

\begin{proof}
Without loss we may assume that the point $n$ is fixed. Let $\sigma_1$
be the restriction of $\sigma$ to the set $\{1, \ldots, n-1\}$. Then 
\[
M = \{\varphi_1:G\rightarrow A\wr S_{n-1} :
\pi\circ\sigma=\sigma_1\}\times \Hom(G, A),
\]
hence, for each $\psi:G\rightarrow A$ and each $\varphi_1:G\rightarrow
A\wr S_{n-1}$ with $\pi\circ\varphi_1=\sigma_1$ there is precisely one
$\varphi\in M$ with $\overline{\varphi}=\psi$ which coincides with
$\varphi_1$ on $A\wr S_{n-1}$. This implies that all fibres have the
same cardinality. Defining $\varphi:G\rightarrow A\wr S_n$ by
$\varphi(g)=(\sigma(g), 1, \ldots, 1)$ we see that $M$ is non-empty,
which implies the surjectivity.
\end{proof}

To bound the number of homomorphisms $\varphi$ for which
$\pi\circ\varphi$ has no fixed point we need the following, which is
contained in \cite[Proposition~1]{Iso1}, in particular the equality of
equations (8) and (9) in that article.
\begin{Lem}
Let $G$ be a group, $A$ a finite abelian group, $U\leq G$ a subgroup of
index $k$, $\varphi_1:G\rightarrow S_k$ the permutation representation
given by the action of $G$ on $G/U$. Then the number of homomorphisms
$\varphi:G\rightarrow A\wr S_k$ with $\pi\circ\varphi=\varphi_1$
equals $|A|^{k-1}|\Hom(U, A)|$.
\end{Lem}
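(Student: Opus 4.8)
The plan is to reinterpret the homomorphisms being counted as $1$-cocycles and then count those by separating off coboundaries from cohomology. First I would fix coordinates on the wreath product, writing an element of $A\wr S_k$ as $(\sigma;f)$ with $\sigma\in S_k$ and $f\in A^{G/U}$, where $\{1,\ldots,k\}$ is identified with the coset space $G/U$ on which $\varphi_1$ acts by left translation. A homomorphism $\varphi$ with $\pi\circ\varphi=\varphi_1$ is then exactly a map $g\mapsto f_g\in A^{G/U}$, and a short computation with the wreath-product multiplication turns $\varphi(gh)=\varphi(g)\varphi(h)$ into the cocycle identity
\[
f_{gh}(x)=f_g(x)\,f_h(g^{-1}x),\qquad g,h\in G,\ x\in G/U.
\]
Thus the set to be counted is precisely $Z^1(G,M)$, the group of $1$-cocycles of $G$ with values in the permutation module $M=A^{G/U}$, where $A$ carries the trivial action and $G$ permutes the coordinates through $\varphi_1$; note $|M|=|A|^k$.

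Next I would invoke the elementary identity $|Z^1(G,M)|=|B^1(G,M)|\cdot|H^1(G,M)|$. For the coboundaries, $B^1(G,M)$ is the image of the map $M\to Z^1(G,M)$ sending $m$ to the cocycle $g\mapsto{}^{g}m\cdot m^{-1}$, whose kernel is $M^G$. Since $\varphi_1$ acts transitively on the coordinates $G/U$, an invariant vector must be constant, so $M^G\cong A$ and hence $|B^1(G,M)|=|M|/|M^G|=|A|^{k-1}$. This already produces the claimed factor $|A|^{k-1}$.

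It remains to identify $H^1(G,M)$ with $\Hom(U,A)$, and this is the one step carrying real content. Because $M=\operatorname{Ind}_U^G A$ is induced from the trivial $U$-module $A$ (induction agreeing with coinduction here, as $[G:U]$ is finite), Shapiro's lemma gives $H^1(G,M)\cong H^1(U,A)$, and triviality of the action makes $H^1(U,A)=\Hom(U,A)$. Concretely the isomorphism is realized by restricting a cocycle to $U$ and evaluating at the coset $x_0=U$, the unique $U$-fixed coordinate: one checks directly that $u\mapsto f_u(x_0)$ is a homomorphism $U\to A$, using $f_{uu'}(x_0)=f_u(x_0)f_{u'}(u^{-1}x_0)=f_u(x_0)f_{u'}(x_0)$. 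I expect the main obstacle to be verifying that this restriction–evaluation map induces a genuine bijection on cohomology — that it is surjective and that two cocycles with equal restriction differ by a coboundary — which amounts to reconstructing a cocycle on all of $G$ from its restriction to $U$ by transport along coset representatives; everything else is bookkeeping. Combining the three counts gives
\[
|Z^1(G,M)|=|B^1(G,M)|\cdot|H^1(G,M)|=|A|^{k-1}\,|\Hom(U,A)|,
\]
as asserted.
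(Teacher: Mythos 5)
Your proof is correct, and it is genuinely different from what the paper does: the paper does not prove this lemma at all, but imports it from \cite{Iso1} (Proposition~1 there, via the equality of equations (8) and (9)), so your argument actually makes this step self-contained. Your route --- identify the fibre of $\varphi\mapsto\pi\circ\varphi$ over $\varphi_1$ with the cocycle group $Z^1(G,M)$ for the permutation module $M=A^{G/U}$, split $|Z^1|=|B^1|\cdot|H^1|$, compute $|B^1(G,M)|=|M|/|M^G|=|A|^{k-1}$ from transitivity, and get $|H^1(G,M)|=|\Hom(U,A)|$ from Shapiro's lemma applied to $M=\operatorname{Ind}_U^G A$ (which coincides with the coinduced module since $[G:U]=k<\infty$) --- is clean and conceptually explains why the answer factors as it does: the $|A|^{k-1}$ is the coboundaries, the $|\Hom(U,A)|$ is the cohomology. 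It also works verbatim when $G$ is infinite, which matters since the lemma as stated assumes only that $U$ has finite index; the cardinality identity $|Z^1|=|B^1|\cdot|H^1|$ survives because $B^1$ is finite and $Z^1$ is a disjoint union of $|H^1|$ cosets of $B^1$. One remark: the step you flag as the ``main obstacle'' --- that restriction to $U$ followed by evaluation at the coset $U$ is bijective on $H^1$ --- requires no separate verification, since evaluation at $U$ is exactly the canonical $U$-equivariant map $\operatorname{Coind}_U^G A\rightarrow A$ that induces the Shapiro isomorphism; once you invoke Shapiro's lemma you are done, and no reconstruction along coset representatives is needed. (A cosmetic slip: the coset $U$ need not be the \emph{unique} $U$-fixed point of $G/U$ --- if $U$ is normal every coset is fixed --- but your map never uses uniqueness, so this is harmless.)
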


We use this to prove the following.
\begin{Lem}
\label{Lem:Fix}
Let $G$  be a group of order $d$, $A$ a finite abelian group,
$\varphi:G\rightarrow A\wr S_n$ be a homomorphism chosen at 
random with respect to the uniform distribution. Then there is a
constant $c>0$, depending only on $G$, such 
that the probability that $\pi\circ\varphi(G)$ has no fixed points is
$\mathcal{O}(e^{-cn^{1/d}})$.
\end{Lem}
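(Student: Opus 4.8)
The plan is to pass to exponential generating functions and then read off the required probability as a ratio of two coefficients. Put $W_n=|\Hom(G,A\wr S_n)|$ and let $W_n^{\ast}$ be the number of those $\varphi$ for which $\pi\circ\varphi(G)$ is fixed-point-free; the probability to be bounded is exactly $W_n^{\ast}/W_n$. A homomorphism $\varphi$ is the same datum as the action $\sigma=\pi\circ\varphi$ of $G$ on $\{1,\dots,n\}$ together with a lift on each $\sigma$-orbit, and these lifts are mutually independent because the base group $A^{n}$ factors as a product over the orbits. By the previous lemma an orbit of length $k$ with stabiliser $U$ carries $|A|^{k-1}|\Hom(U,A)|$ lifts, and a direct count gives $(k-1)!$ transitive actions of $G$ on a $k$-set with prescribed index-$k$ stabiliser; the exponential formula for labelled structures therefore yields
\[
\sum_{n\ge 0}\frac{W_n}{n!}z^{n}=\exp\bigl(P(z)\bigr),\qquad
P(z)=\sum_{k=1}^{d}\frac{|A|^{k-1}}{k}\Bigl(\sum_{[G:U]=k}|\Hom(U,A)|\Bigr)z^{k}.
\]
Since $|G|=d$, every index is at most $d$, so $P$ is a polynomial of degree $d$ with positive leading coefficient $|A|^{d-1}/d$ (coming from $U=\{1\}$); we may assume $d\ge 2$, the trivial group being vacuous. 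The length-one orbits are exactly the fixed points, and they contribute precisely the linear term $hz$ with $h=|\Hom(G,A)|$, so deleting them gives $\sum_n (W_n^{\ast}/n!)z^{n}=\exp\bigl(P^{\ast}(z)\bigr)$ with $P^{\ast}=P-hz$.

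Because $P^{\ast}$ has nonnegative coefficients, so does $\exp(P^{\ast})$, and Cauchy's inequality in its crudest form gives, for every $s>0$,
\[
\frac{W_n^{\ast}}{n!}=[z^{n}]\exp\bigl(P^{\ast}(z)\bigr)\le s^{-n}\exp\bigl(P^{\ast}(s)\bigr)=\exp\bigl(\Phi^{\ast}(s)\bigr),\qquad \Phi^{\ast}(x):=P^{\ast}(x)-n\log x.
\]
Optimising, I take $s$ to be the minimiser of $\Phi^{\ast}$ on $(0,\infty)$ (which exists, since $\Phi^{\ast}\to+\infty$ at both ends), so that $W_n^{\ast}/n!\le \exp(\Phi^{\ast}(s))$ with no analysis whatsoever.

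For the denominator I would invoke the saddle-point method: $\exp(P)$ is the exponential of a polynomial of degree $d\ge 2$ with positive leading coefficient, hence is Hayman-admissible, and its coefficients satisfy
\[
\frac{W_n}{n!}\sim\frac{\exp\bigl(\Phi(r)\bigr)}{\sqrt{2\pi B(r)}},\qquad \Phi(x):=P(x)-n\log x,
\]
where $r>0$ is the saddle solving $rP'(r)=n$ and $B(r)$ is of order $n$; in particular $W_n/n!\ge \kappa\,n^{-1/2}\exp(\Phi(r))$ for some $\kappa>0$. Since the degree-$d$ term dominates the saddle equation, $r\sim\rho:=(n/|A|^{d-1})^{1/d}$, which grows like $n^{1/d}$. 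The exponent of the ratio is then controlled by pure algebra: writing $\Phi=\Phi^{\ast}+hx$ and using that $s$ minimises $\Phi^{\ast}$,
\[
\Phi(r)=\Phi^{\ast}(r)+hr\ge \Phi^{\ast}(s)+hr,
\]
so $\Phi^{\ast}(s)-\Phi(r)\le -hr$. Combining the three displays,
\[
\frac{W_n^{\ast}}{W_n}\le \kappa^{-1}n^{1/2}\exp\bigl(\Phi^{\ast}(s)-\Phi(r)\bigr)\le \kappa^{-1}n^{1/2}e^{-hr}=\mathcal{O}\bigl(e^{-cn^{1/d}}\bigr)
\]
for any $c<h\,|A|^{-(d-1)/d}$, which is the assertion; if no fixed-point-free action exists for the given $n$ then $W_n^{\ast}=0$ and there is nothing to prove. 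The one genuinely analytic step, and where I expect the real work, is the saddle-point lower bound on $W_n$: I must verify Hayman admissibility and bound the tail of the defining contour integral so that $W_n/n!$ is indeed of size $e^{\Phi(r)}$ up to a polynomial factor, and confirm $r\sim n^{1/d}$. The exponent $n^{1/d}$ is thereby forced by the degree of $P$, which equals the largest index $d$ of a subgroup of $G$, while the comparison producing the factor $e^{-hr}$ is entirely elementary.
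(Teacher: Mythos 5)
Your argument is correct, but it diverges from the paper at the decisive estimation step, so the two are worth comparing. Both proofs start identically: stratify $\Hom(G,A\wr S_n)$ by the orbit structure of $\sigma=\pi\circ\varphi$ and use the preceding lemma to count $|A|^{k-1}|\Hom(U,A)|$ lifts per orbit with stabiliser $U$ of index $k$. Your identity $\sum_n (W_n/n!)z^n=\exp(P(z))$ is exactly the exponential-formula repackaging of the explicit sum over orbit-type tuples $(m_1,\ldots,m_\ell)$ that the paper writes down; your sum over all index-$k$ subgroups absorbs the paper's centraliser constants $c_i$, since the number of conjugates of $U_i$ equals $(G:U_i)/c_i$. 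From there the paper stays entirely elementary: for each tuple without fixed points it exhibits a companion tuple with fixed points whose term is larger by a factor $e^{cn^{1/d}}$, trading $\lfloor cn^{1/d}\rfloor$ orbits of the most frequent type for fixed points and beating the factorial penalty $\bigl((G:U_1)\lfloor cn^{1/d}\rfloor\bigr)!$; since there are only polynomially many tuples, the lemma follows, with the crude constant $c^{-1}=ed\ell|A|\,|\Hom(U_1,A)|$. You instead play the trivial bound $W_n^*/n!\le e^{\Phi^*(s)}$ against a Hayman saddle-point lower bound for $W_n$ and compare exponents via $\Phi=\Phi^*+hx$; this buys a near-optimal constant (any $c<h|A|^{-(d-1)/d}$, which is essentially the true decay rate) at the price of importing Hayman's theorem rather than arguing from scratch. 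One claim in your write-up does need repair: it is \emph{not} true that the exponential of every real polynomial of degree at least two with positive leading coefficient is Hayman-admissible; $e^{z^2}$ has vanishing odd coefficients, whereas admissibility forces eventually positive ones. Hayman's theorem on $e^{Q}$ requires, in addition, that $[z^n]e^{Q}>0$ for all large $n$. In your setting this is immediate and should be said: $h=|\Hom(G,A)|\ge 1$, so $e^{P}=e^{hz}e^{P^{*}}$ is the product of a series with strictly positive coefficients and a series with nonnegative coefficients and constant term $1$, hence has all coefficients positive. With that one-line fix, your lower bound $W_n/n!\ge\kappa n^{-1/2}e^{\Phi(r)}$, the estimate $r\sim(n/|A|^{d-1})^{1/d}$, and the comparison $\Phi^{*}(s)-\Phi(r)\le -hr$ all stand, and the lemma follows. (Note that, exactly as in the paper's own proof, your constant depends on $A$ as well as on $G$.)
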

\begin{proof}
Let $U_1, \ldots, U_\ell$ be a complete list of subgroups of $G$ up to
conjugation, where $U_\ell = G$. To
determine a homomorphism $\varphi:G\rightarrow A\wr S_n$ we first have
to choose a homomorphism $\sigma:G\rightarrow S_n$, and then count the number
of ways in which this homomorphism can be extended to a homomorphism
into $A\wr S_n$. Suppose that the action of $G$ on $\{1, \ldots, n\}$
induced by $\sigma$ has $m_i$ orbits on which $G$ acts similar to
the action of $G$ on $G/U_i$. Then by the previous lemma we find that
there are
\[
\prod_{i=1}^\ell \big(|A|^{(G:U_i)-1}|\Hom(U_i, A)|\big)^{m_i}
\]
possibilities to extend $\sigma$. Next we compute the number of ways 
$\sigma$ can be chosen such that $\sigma$ realizes given values $m_1,
\ldots, m_\ell$. Choices of $\sigma$ correspond 
to subgroups of $S_n$ conjugate to some fixed subgroup with the given
number of orbits of the respective types, and the number of such
subgroups is $(S_n:C_{S_n}(\sigma(G))$. We have $C_{S_n}(\sigma(G))=
\times_{i=1}^\ell C_{\mathrm{Sym}(G/U_i)}(G)\wr S_{m_i}$, hence,
defining $c_i=|C_{\mathrm{Sym}(G/U_i)}(G)|$ we find that $\sigma$ can
be chosen in $\frac{n!}{\prod_{i=1}^\ell m_i! c_i^{m_i}}$ different
ways. Combining these results we obtain
\[
\label{eq:HomNumber}
|\Hom(G, A\wr S_n)| = n!\underset{m_1+\dots+m_\ell=n}{\sum_{m_1,\ldots,m_\ell}}
\prod_{i=1}^\ell\frac{\left(|A|^{(G:U_i)-1}|\Hom(U_i, A)|\right)^{m_i}} 
{m_i!c_i^{m_i}}.
\]
We claim that terms with $m_\ell=0$ are small when compared to the
whole sum. Since the number of summands is polynomial in $n$, it
suffices to show that for every tuple $(m_1, \ldots, m_{\ell-1}, 0)$
there exists a tuple $(m_1', \ldots, m'_{\ell-1}, m'_\ell)$ with
$m_\ell'\neq 0$, such that the summand corresponding to the first
tuple is smaller than the one corresponding to the second by a factor
$e^{cn^{1/d}}$. We do so by explicitly constructing the second
tuple. Without loss we may assume that in the first
tuple $m_1$ is maximal. We then set $m_1'=m_1-\lfloor cn^{1/d}\rfloor$,
$m_\ell'=(G:U_1)\lfloor cn^{1/d}\rfloor$, and $m_i'=m_i$ for $i\neq 1,
\ell$, where $c$ is a positive constant chosen later. Then the
product on the right hand side of the last displayed equation changes by a
factor
\[
\frac{m_1!}{(m_1-\lfloor cn^{1/d}\rfloor)!}\left(\frac{|A|^{(G:U_1)-1}|
\Hom(U_1, A)|}{c_1|\Hom(G, A)|^{(G:U_1)}}\right)^{-\lfloor cn^{1/d}\rfloor}
\frac{1}{\big((G:U_1)\lfloor cn^{1/d}\rfloor\big)!}.
\]
We may assume that $n$ is sufficiently large, so that $m_1>2\lfloor
cn^{1/d}\rfloor$. We can then estimate the factorials using the
largest and smallest factors occurring. The other terms can be bounded
rather careless to find that this quotient is
at least
\[
\left(\frac{m_1}
{\big(cdn^{1/d}|A|\big)^{d}|\Hom(U_1, A)|}\right)^{\lfloor cn^{1/d}\rfloor}.
\]
Since $m_1$ was chosen maximal we have $m_1\geq\frac{n}{|G|\ell}$, and
we find that for $c^{-1}= ed\ell|A||\Hom(U_1, A)|$ the last
expression is at least $e^{\lfloor cn^{1/d}\rfloor}$. Since $c$
depends only on the subgroup $U_1$, we can take the minimum value over
all the finitely many subgroups and obtain that there exists an
absolute constant $c>0$, such that the number of homomorphisms
$\varphi$ such that $\pi\circ\varphi$ has no fixed point is smaller by
a factor $\mathcal{O}(e^{-cn^{1/d}})$ than the number of all
homomorphisms.
\end{proof}

To prove the theorem let $\varphi:G\rightarrow A\wr S_n$
be chosen with respect to the uniform distribution. Let $p$ be the
probability that $(\pi\circ\varphi)(G)$ has no fixed point. By
Lemma~\ref{Lem:uniform} we see that the conditional distribution of
$\overline{\varphi}$ subject to the condition that
$(\pi\circ\varphi)(G)$ has a fixed point is uniform, hence 
$\delta=(1-p)u+p\delta^0$ for some distribution function
$\delta^0$. This implies $\|\delta-u\|_\infty\leq p$. By
Lemma~\ref{Lem:Fix} we see that $p=\mathcal{O}(e^{-cn^{1/d}})$, and
our claim follows.

To deduce the corollary note that $W_n$ is the subgroup of $C_2\wr
S_n$ defined by the condition $(\pi; a_1, \ldots, a_n)\in 
W_n\Leftrightarrow a_1\cdots a_n=1$, that is, a homomorphism
$\varphi:G\rightarrow C_2\wr S_n$ has image in $W_n$ if and only if
$\overline{\varphi}:G\rightarrow C_2$ is trivial. By the theorem the
probability for this event differs from the probability that a random
homomorphism $G\rightarrow C_2$ is trivial by
$\mathcal{O}(e^{-cn^{1/d}})$, hence, we have
\[
|\Hom(G, W_n)| = \left(\frac{1}{|\Hom(G, C_2)|}+\mathcal{O}(e^{-cn^{1/d}})\right)
|\Hom(G, C_2\wr S_n)|.
\]
But there is a bijection between non-trivial homomorphisms
$G\rightarrow C_2$ and subgroups of index 2, hence, $|\Hom(G,
C_2)|=1+s_2(G)$, and the corollary follows.

\end{document}